\title{Long-Range Correlations of Sequences Modulo $1$}
\author{
{\sc Christopher Lutsko$^{\ast}$}
}
\begin{document}

  \maketitle
  \begin{abstract}
    \noindent In this paper we consider the fractional parts of a general sequence, for example the sequence $\alpha \sqrt{n}$ or $\alpha n^2$. We give a general method, which allows one to show that long-range correlations (correlations where the support of the test function grows as we consider more points) are Poissonian. We show that these statements about convergence can be reduced to bounds on associated Weyl sums. In particular we apply this methodology to the aforementioned examples. In so doing, we recover a recent result of Technau-Walker (2020) for the triple correlation of $\alpha n^2$ and generalize the result to higher moments. For both of the aforementioned sequences this is one of the only results which indicates the pseudo-random nature of the higher level ($m \ge 3$) correlations.

    \medskip\noindent
        {\sc MSC2020:}
        11K06; 11K60; 11L07; 37A44; 37A44

        \medskip\noindent
            {\sc Key words and phrases:} 
            Local Statistics; Sequences Modulo $1$; Exponential Sums.
  \end{abstract}

  \renewcommand{\thefootnote}{*}
  \footnotetext{\noindent Rutgers University \newline Email: chris@lutsko.com}

  \onehalfspacing
  \setlength{\abovedisplayskip}{1mm}

  \section{Introduction}
\label{s:Introduction}

Let $\{a(n)\}_{n \in \N}$ be a sequence in $\R$. For a long time, mathematicians have studied the distribution of the fractional parts of such sequences. That is, the sequence $\{x_n\}_{n\in \N}$ given by

\begin{align*}
  x_n := a(n) \quad \Mod{1}.
\end{align*}
In general, it is fairly well-understood which sequences are uniformly distributed on the interval (see \cite{KuipersNiederreiter1974}). However, uniform distribution is a relatively crude measure of pseudo-randomness. Recently, there has been a lot of interest in the fine-scale local statistics of such sequences. Namely, a central question is to understand when the distribution of gaps between neighboring points in these sequences converges to the exponential function, as they do for uniformly distributed random variables on the interval.

This problem is fundamental from a mathematical point of view in understanding the random nature of deterministic sequences. Furthermore, it also has implications in physics. Since such sequences can be used to understand the energy levels of quantum systems, understanding their distribution has numerous implications. For example, the sequence $a(n) = \alpha n^2$ for $\alpha \neq 0$ corresponds to the eigenvalues of a boxed harmonic oscillator. In general, the well-known Berry-Tabor \cite{BerryTabor1977} states that for generic surfaces, if the Hamiltonian dynamics on the surface are integrable, then the gap distribution for the spectrum has a well-defined limit, given by the exponential distribution. There exist counter-examples to this statement which can be explained away (thus the word generic), however in general this relationship has been confirmed by experiment. But very little can be rigorously proved. For more information see the review \cite{Marklof2000}.

\subsection{Long-Range Correlations}

In general the gap distribution is very hard to work with directly. This owes to the fact that neighboring points are not necessarily consecutive points in the sequence. To get around this problem, we instead consider the $m$-level correlations: Given a sequence $\{x_n\}_{n \in \N} \subset [0,1)$, and a vector $\vect{j} \in \Z^m$, let $\Delta(\vect{j}) \in \R^{m-1}$ denote the difference vector

  \begin{align*}
      \Delta(\vect{j}) : = \left( x_{j_1} - x_{j_2}, x_{j_2} - x_{j_3}, \dots , x_{j_{m-1}} - x_{j_m}\right).
  \end{align*}
  Then for $f \in C_{c}^{\infty}(\R^{m-1})$ we define the \emph{$m$-level correlation} to be

 \begin{align} \label{corr}
   R^{(m)}(N,f) := \frac{1}{N} \sum_{\vect{j} \in \{1,\dots, N\}^m}^{\ast} f(N(\Delta(\vect{j}))), 
 \end{align}
 where the notation $\displaystyle \sum^\ast$ indicates that all entries of the vector $\vect{j}$ are distinct. We say that the $m$-level correlation is \emph{Poissonian} if 
\begin{align} \label{Poissonian}
  R^{(m)}(N,f) \to \int_{\R^{m-1} } f(\vect{x} ) d\vect{x}
\end{align}
in the limit as $N \to \infty$. And we say the sequence is \emph{Poissonian} if all the correlations for $m\ge 2$ are Poissonian. By the method of moments, it can be shown that if a sequence is Poissonian then the gap distribution is exponential. Therefore it is common to study the correlations rather than the gap distribution itself. 

That said, it remains a very difficult problem to show that a given sequence is Poissonian. While there are exceptions (e.g Rudnick-Zaharescu showed that almost every dilate of a lacunary sequence is Poissonian \cite{RudnickZaharescu2002}), there are very few results in the area. For example, if a sequence grows with a power law, then very little is known (especially when considering higher power correlations $(m \ge 3)$). Rather than study the correlations, in this paper we consider a coarser measure -- the \emph{long-range correlations:} fix $ \tau \in(0,1)$ 

\begin{align} \label{dil corr}
  R^{(m)}(N,f,\tau) := \frac{1}{N} \sum_{\vect{j} \in \{1,\dots, N\}^m}^{\ast} f(N^\tau(\Delta(\vect{j}))).
\end{align}
Hence, in comparison with the standard correlations, we are increasing the support of the function $f$ as $N \to \infty$. For uniformly distributed random variables on the interval one would expect:

\begin{align} \label{dil Poissonian}
  R^{(m)}(N,f,\tau) = N^{(m-1)(1-\tau)} \left(\int_{\R^{m-1}} f(\vect{x}) d\vect{x} + o(1)\right).
\end{align}
In this paper, we give a  general method to show that (for particular values of $\tau >0$) these long-range correlations converge to this limit, and thus coincide with the Poissonian limit. We show that these long-range correlations can be expressed in terms of the Poissonian limit \eqref{dil Poissonian} and an error which can be written in terms of Weyl sums. Then if we can appropriately bound these Weyl sums we can show that the error is small. 

It should be noted that if the discrepancy of a sequence goes to $0$ fast enough, then one can prove that the long-range correlations converge to the Poissonian limit for $\tau < \frac{1}{2}-\epsilon$ for all $\epsilon >0$ (see \cite{TechnauWalker2020} for details on this relation). The methodology in this paper, when applied to our examples, will improve on this 'na\"{i}ve' bound.

\subsection{Moments}

The long-range correlations are the most natural object to work with from a mathematical point of view. However from a intuitive point of view one can also consider the following random variable:

\begin{align} \label{W def}
  W_{L,N}=W_{L,N}(Y) : = \#\{n \le N : x_n \in [Y,Y+L/N] \mod{1}\}.
\end{align}
That is, we consider the number of points in a randomly placed interval of size $\frac{L}{N}$. Using a standard technique (see \cite{TechnauWalker2020}) one can prove the following proposition

\begin{proposition}\label{prop:moments}
  Fix a sequence $\{x_n \}_{n\in \N}$ and a moment $m \ge 2$. Then for a given $\tau < 1$, suppose that for all $f\in C_c^\infty(\R^{m-1})$ we have

  \begin{align*}
    R^{(m)}(N,f,\tau) = N^{(m-1)(1-\tau)}\left(\int_{\R^{m-1}}f(\vect{x})d\vect{x} +o(1)\right)
  \end{align*}
  as $N \to \infty$. Then

  \begin{align}
    \expect{W_{N,L}^m} = L^m (1+o(1)),
  \end{align}
  as $N \to \infty$, where $L = N^{1-\tau}$.
\end{proposition}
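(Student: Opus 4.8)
The plan is to expand the $m$-th moment of $W_{N,L}$ combinatorially and recognize each resulting term as a truncated correlation sum of the form $R^{(k)}(N,\cdot,\tau)$ for $k \le m$. Writing $W_{N,L}(Y) = \sum_{n \le N} \mathbbm{1}_{[Y,Y+L/N]}(x_n \bmod 1)$, I would raise this to the $m$-th power and split the resulting sum over $(n_1,\dots,n_m) \in \{1,\dots,N\}^m$ according to the partition of the index set recording which of the $n_i$ coincide. For a partition into $k$ blocks the diagonal contribution is a sum over $k$ distinct indices of a product of $k$ indicator functions of the interval $[Y,Y+L/N]$, weighted by combinatorial (Stirling-type) factors counting the partitions with a given block structure.

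The key step is then to integrate over the random shift $Y$ (uniformly on $[0,1)$, say) and to observe that for $k$ distinct indices, $\int_0^1 \prod_{i=1}^k \mathbbm{1}_{[Y,Y+L/N]}(x_{n_i}) \, dY$ equals the Lebesgue measure of the set of $Y$ for which all $k$ points lie in the window, which can be written exactly as an integral of a function supported on a box of side $\sim L/N = N^{-\tau}$ applied to the consecutive differences $x_{n_1} - x_{n_2}, \dots, x_{n_{k-1}} - x_{n_k}$ (together with one overall integration in the remaining variable that produces the factor $L/N$). After rescaling the differences by $N^\tau$ this is precisely (up to the normalising $1/N$ and a factor of $N \cdot (L/N) = L$) a long-range correlation $R^{(k)}(N, f_k, \tau)$ for an explicit, compactly supported (though merely piecewise-smooth, so one smooths it from above and below) test function $f_k$ whose integral over $\R^{k-1}$ is $1$. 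By the hypothesis, $R^{(k)}(N,f_k,\tau) = N^{(k-1)(1-\tau)}(1+o(1)) = L^{k-1}(1+o(1))$, so the corresponding block-structure term contributes $\sim (\text{combinatorial factor}) \cdot L \cdot L^{k-1} = c_k L^k$.

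Finally I would collect the contributions: the $k = m$ term (all indices distinct) gives $L^m(1+o(1))$ since there is a single way to have $m$ singleton blocks, while every term with $k < m$ blocks contributes only $O(L^k) = O(N^{(1-\tau)k}) = o(L^m)$, being of strictly lower order in $L$. Summing, $\expect{W_{N,L}^m} = L^m(1+o(1))$, as claimed. One should double-check the boundary case $L^{k-1}$ with $k=1$ (a single block, all $n_i$ equal), which contributes just $\sum_{n\le N}\int_0^1 \mathbbm{1}_{[Y,Y+L/N]} = N\cdot L/N = L = O(L)$, again negligible.

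The main obstacle I anticipate is a technical rather than conceptual one: the indicator $\mathbbm{1}_{[Y,Y+L/N]}$ is not smooth, so to invoke the hypothesis (stated for $f \in C_c^\infty$) I need to sandwich it between smooth bump functions $f_k^- \le \mathbbm{1}^{\otimes} \le f_k^+$ whose integrals differ by $o(1)$ after the $N^\tau$-rescaling, and verify that the error introduced is uniformly $o(L^m)$ across all $O(1)$ partition types. A secondary subtlety is the wrap-around modulo $1$ at the endpoints $Y \approx 1$, which affects only an $O(L/N)$-measure set of shifts $Y$ and is harmless, but must be mentioned. The combinatorics (keeping track of which differences $x_{n_i} - x_{n_{i+1}}$ survive in the rescaled integrand for a general block structure, not just consecutive-index blocks) requires care but is routine once one reorders indices within each block.
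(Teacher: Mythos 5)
The paper does not actually write out a proof of this proposition; it defers to the ``standard technique'' of \cite{TechnauWalker2020}, and your argument is exactly that technique: expand $W_{N,L}^m$ over partitions of $\{1,\dots,m\}$, use $\mathbbm{1}^2=\mathbbm{1}$ to collapse each $k$-block term to a sum over $k$ distinct indices, average over the shift $Y$ to produce a compactly supported, piecewise linear correlation test function of total integral $1$ times a factor $L/N$, and then sandwich that function between smooth bumps. The skeleton, the identification of the leading $k=m$ term as $L\cdot R^{(m)}(N,f_m,\tau)=L^m(1+o(1))$, and the smoothing step are all correct.

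There is, however, one genuine gap. For a partition with $k<m$ blocks you invoke ``the hypothesis'' to get $R^{(k)}(N,f_k,\tau)=N^{(k-1)(1-\tau)}(1+o(1))$, but the proposition assumes an asymptotic for the $m$-level correlation only, and the level-$m$ hypothesis does not control the lower levels: the $k$-block term equals $L\cdot R^{(k)}(N,f_k,\tau)$ with $f_k$ a function of only $k-1$ rescaled differences, and padding the index vector with $m-k$ unconstrained indices produces a test function on $\R^{m-1}$ that is not compactly supported, so the hypothesis cannot be applied to it. Nor does the trivial bound save you: the $k$-block term is crudely at most $N^{k-1}L$, which is $o(L^m)=o(N^{m(1-\tau)})$ only for $\tau<\frac{m-k}{m-1}$, a range far smaller than the ones in Theorems \ref{thm:an^2} and \ref{thm:a sqrt(n)} (e.g.\ for $m=3$, $k=2$ it requires $\tau<1/2$, whereas the theorem reaches $\tau<3/4$). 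The repair is to assume the correlation asymptotic (or merely the upper bound $R^{(k)}(N,f,\tau)\ll N^{(k-1)(1-\tau)}$) for every $2\le k\le m$; this is harmless in context, since the admissible range of $\tau$ in the main theorems shrinks with $m$, so the level-$m$ statement always comes packaged with all lower levels, but it must be said. A smaller point: discarding the wrap-around at $Y\approx 1$ because it concerns an $O(L/N)$-measure set of shifts is not by itself quantitatively sufficient (the crude bound $W_{N,L}\le N$ gives a contribution $(L/N)N^m\gg L^m$); the clean route is to keep the window as an arc in $\R/\Z$, which changes nothing because the correlation sums already count differences modulo $1$.
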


In words, this shows that, if we can show that the $m$-level long-range correlation converges to the Poissonian limit, then we can show that the $m^{th}$ moment of $W_{N,L}$ is (to leading order) $L^m$, as is the case for uniformly distributed random variables.

\subsection{Specific Examples}
In Section \ref{s:Methodology} we explain the general methodology for proving convergence as in \eqref{dil corr} however, prior to this we give two applications to well-studied sequences. 

\subsubsection{The Sequence $a(n) = \alpha n^2$}

If we consider $a(n) = \alpha n^2$, then the gaps between the sequence $x_n$ are related to the gaps in the energy levels of the ``boxed oscillator'', owing to this fact, and the sequence's importance mathematically, this case has been extensively studied \cite{RudnickSarnak1998, RudnickSarnakZaharescu2001, Heath-Brown2010}. In particular Rudnick and Sarnak showed that there is a set of $\alpha$ of full Lebesgue measure for which the pair correlation ($m=2$) is Poissonian. However very little is known about higher level correlations. 

We say $\alpha \in \R$ is Diophantine if, for every $\epsilon >0$ there is a $c=c(\alpha)>0$ such that
\begin{align}\label{Dio}
  \abs{\alpha - \frac{p}{q}} > \frac{c}{q^{2+\epsilon}}
\end{align}
for all integers $p,q$. Our first result, shows that for $\alpha$ Diophantine and for $\tau$ lying in a particular range, the long-range correlations for $a(n)=\alpha n^2$ are Poissonian:

\begin{theorem} \label{thm:an^2}
  Let $a(n) =\alpha n^2$ for $\alpha$ Diophantine, fix $m \ge 2$ and $\epsilon>0$, then if  $ 0<  \tau \le \frac{m}{2m-2} -\epsilon$ the long-range $m$-level correlation is Poissonian. Namely:
  \begin{align} \label{n^2 Poi}
    R^{(m)}(N,f,\tau) = N^{(m-1)(1-\tau)} \left(\int_{\R^{m-1}}f(\vect{x})d\vect{x} + o(1)\right),
  \end{align}
  as $N \to \infty$.
\end{theorem}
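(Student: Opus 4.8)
\emph{Proof proposal.} My plan is to expand $f$ in Fourier modes on the torus, peel off the Poissonian main term from the zero frequency, and estimate the remainder by bounds on the quadratic Weyl sums $S_N(\beta):=\sum_{n\le N}e(\beta n^2)$. \textbf{Step 1: Fourier expansion.} Since $f\in C_c^\infty(\R^{m-1})$ has compact support, for $N$ large I would replace $f(N^\tau\Delta(\vect j))$ by the periodization of $f(N^\tau\,\cdot\,)$ on $(\R/\Z)^{m-1}$ evaluated at $\Delta(\vect j)$, the difference being a negligible ``wrap-around'' boundary contribution of size $O(N^{(m-1)(1-\tau)-\tau})$ since $\tau>0$. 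This periodization has Fourier coefficients $N^{-(m-1)\tau}\hat f(N^{-\tau}\vect k)$, and using $x_{j_i}-x_{j_{i+1}}\equiv\alpha(j_i^2-j_{i+1}^2)\pmod 1$ I obtain
\begin{align*}
  R^{(m)}(N,f,\tau)=\frac{1}{N}\,N^{-(m-1)\tau}\sum_{\vect k\in\Z^{m-1}}\hat f(N^{-\tau}\vect k)\,T_N(\vect k),\qquad T_N(\vect k):=\sum_{\vect j\in\{1,\dots,N\}^m}^{\ast}e\!\left(\alpha\sum_{i=1}^m\ell_i\,j_i^2\right),
\end{align*}
where $\ell_i=k_i-k_{i-1}$ with $k_0=k_m=0$, so $\ell_1+\dots+\ell_m=0$ and $\vect k\mapsto(\ell_1,\dots,\ell_m)$ is a bijection onto this hyperplane in $\Z^m$. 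The term $\vect k=0$ gives $N^{-(m-1)\tau}\hat f(0)(N-1)(N-2)\cdots(N-m+1)=N^{(m-1)(1-\tau)}\bigl(\int f+O(N^{-1})\bigr)$, the claimed main term; so it suffices to prove $\sum_{\vect k\ne 0}\abs{\hat f(N^{-\tau}\vect k)}\,\abs{T_N(\vect k)}=o(N^m)$.

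\textbf{Step 2: reduction to Weyl sums and the range of $\tau$.} Opening the $\ast$-constraint by inclusion--exclusion over set partitions of $\{1,\dots,m\}$ writes $T_N(\vect k)$ as a bounded signed combination of products $\prod_B S_N\bigl(\alpha\sum_{i\in B}\ell_i\bigr)$ over the blocks. Because $\hat f$ is Schwartz the sum over $\vect k$ is effectively confined to $\abs{k_i}\ll N^{\tau+\epsilon}$, hence $\abs{\ell_i}\ll N^{\tau+\epsilon}$. For the partition into singletons with all $\ell_i\ne 0$ --- the generic and, as it turns out, dominant term --- one must bound $N^{m-r}\sum\abs{S_N(\alpha\ell_{i_1})}\cdots\abs{S_N(\alpha\ell_{i_r})}$ over vectors with $r$ nonzero coordinates summing to $0$; using square-root cancellation $\abs{S_N(\alpha\ell)}\ll_\epsilon N^{1/2+\epsilon}$ for $1\le\ell\ll N^\tau$ on each factor and the count $\ll N^{(r-1)\tau}$ of admissible vectors yields a contribution $\ll N^{\,m-r+\frac12+(r-1)(\tau+\frac12)+\epsilon'}$, which is $o(N^m)$ exactly when $(r-1)\tau<r/2$, i.e.\ $\tau<\tfrac{r}{2(r-1)}$. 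The worst case $r=m$ is precisely the hypothesis $\tau\le\tfrac{m}{2m-2}-\epsilon$. The remaining partitions --- those containing a block of size $\ge 2$, or a block whose frequency $\sum_{i\in B}\ell_i$ vanishes --- carry strictly fewer Weyl factors relative to the number of free frequencies, and a short computation shows they are $o(N^m)$ already for every $\tau<1$, so they do not constrain the range.

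\textbf{Step 3: the main obstacle.} Everything hinges on the input bound for $S_N(\alpha\ell)$ when $\alpha$ is Diophantine. For a single Diophantine $\alpha$ one does have $\abs{S_N(\alpha)}\ll_\epsilon N^{1/2+\epsilon}$, but for the multiplier $\alpha\ell$ with $\ell$ as large as $N^\tau$ the best rational approximation of $\alpha\ell$ may have denominator only about $N^{1-\tau}$, so the \emph{pointwise} bound degrades to $N^{(1+\tau)/2+\epsilon}$, which is insufficient to reach the range above. The genuinely hard step will therefore be to recover the full strength of square-root cancellation \emph{on average over the multiplier}: writing $\abs{S_N(\alpha\ell)}^2=\sum_{n,n'}e(\alpha\ell(n^2-n'^2))$ and summing in $\ell$ reduces matters to a small-scale equidistribution statement for the difference sequence $\alpha(n^2-n'^2)=\alpha(n-n')(n+n')\bmod 1$ at scale $N^{-\tau}$, where one has to beat the divisor-function barrier coming from the multiplicity of representations $h=n^2-n'^2$ by exploiting the Diophantine hypothesis to control the exponential sums over the sparse set of exceptional multipliers. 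This is exactly the type of estimate Technau-Walker establish in the case $m=3$; running their argument together with the hyperplane combinatorics $\ell_1+\dots+\ell_m=0$ of Step~2 would give the required bound for general $m$, and hence the theorem.
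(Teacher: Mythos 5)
Your overall architecture matches the paper's: periodize/Poisson-sum in the test function, extract the main term from the zero frequency, convert the off-diagonal constraint by inclusion--exclusion, and reduce everything to products of quadratic Weyl sums $S(N,k)=\sum_{n\le N}e(k\alpha n^2)$ with frequencies on the hyperplane $\ell_1+\dots+\ell_m=0$, $|\ell_i|\ll N^{\tau+\epsilon}$. You also correctly locate the crux in Step 3: the pointwise bound $|S(N,\ell)|\ll N^{1/2+\epsilon}$ is simply false uniformly for $\ell$ up to $N^\tau$ when $\alpha$ is merely Diophantine, so the bookkeeping of Step 2 (which uses that pointwise bound on every factor) is not rigorous as written, and some mean-value statement over the multiplier is required.

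The gap is that you stop exactly there. The estimate you need is not something that has to be re-derived from Technau--Walker's $m=3$ analysis; it is Corollary 5 of Rudnick--Sarnak (1998): for $\alpha$ Diophantine and any $\delta>0$, $\sum_{1\le k\le M}|S(N,k)|^2\ll M^{1+\delta}N^{1+\delta}$ and $\sum_{1\le k\le M}|S(N,k)|\ll M^{1+\delta}N^{1/2+\delta}$. But quoting these averaged bounds is not enough by itself: your Step 2 count multiplies $m$ pointwise bounds together, and you cannot substitute an $L^1$- or $L^2$-average into each factor of a product independently. The paper's resolution is to single out one free variable (say $k_1$), note $0<|d(\vect k)|\le mM$ when $d(\vect k)\ne 0$, apply Cauchy--Schwarz in $k_1$ to pair $|S(N,k_1)|$ with $|S(N,d(\vect k))|$ and use the $L^2$ mean-value bound on that pair, then use the $L^1$ mean-value bound on each of the remaining $m-2$ coordinates; this yields $\ll M^{m-1+\delta'}N^{-m/2}+M^{1+\delta}N^{\delta-1}$, which is $o(1)$ precisely for $M=N^{\frac{m}{2m-2}-\epsilon}$. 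The degenerate case $d(\vect k)=0$ (your ``vanishing block frequency'') is not free either: it forces $k_1=-k_2-\dots-k_{m-1}$ and contributes a term of exactly the same shape with $m$ replaced by $m-1$, which the paper handles by induction on $m$ (the induction works because the admissible range of $\tau$ shrinks with $m$). So: right skeleton, crux correctly diagnosed, but the two quantitative steps that actually close the argument --- the citation of the Rudnick--Sarnak mean-value bounds and the Cauchy--Schwarz/induction mechanism that converts averages over a single multiplier into a bound on the full multilinear sum --- are missing.
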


By Dirichlet's approximation theorem, the set of Diophantine $\alpha$ has full Lebesgue measure.

Theorem \ref{thm:an^2} has a number of implications. First of all, for $m=3$ this recovers a Theorem proved by Technau and Walker \cite{TechnauWalker2020}. Namely that the long-range correlations are Poissonian for $\tau < \frac{3}{4}-\epsilon$. In addition,  for $m \ge 4$ this is the first result proving convergence of long-range correlations that goes beyond the discrepancy bound of $\tau = \frac{1}{2}-\epsilon$.

Interestingly, our result holds for all Diophantine $\alpha$. In \cite{RudnickSarnak1998}, Rudnick and Sarnak give an example of a Diophantine $\alpha$ for which the (not long-range) correlations are not Poissonian. Theorem \ref{thm:an^2} thus shows that for such examples, the non-Poissonian behavior happens on a very fine scale which is $o(N^{-\tau})$.

\subsubsection{The Sequence $a(n) = \alpha \sqrt{n}$}

A second sequence of particular interest is the sequence $a(n) = \alpha \sqrt{n}$. If we consider $a(n) = \alpha n^\beta$ for $0<\beta < 1$, then it is expected that for $\beta \neq 1/2$, the gap statistics converge to the exponential distribution. In contrast, in a surprising paper, Elkies and McMullen \cite{ElkiesMcMullen2004} used homogeneous dynamics methods to show that if $\alpha^2 \in \Q$ and we consider $a(n)=\alpha \sqrt{n}$, then the gap distribution converges to an explicit distribution which is not the exponential. Moreover they conjectured that for $\alpha^2 \not\in \Q$ the gap distribution \emph{is} exponential. Also surprisingly El-Baz-Marklof-Vinogradov \cite{El-BazMarklofVinogradov2015a} showed that for $\alpha =1$ the pair correlation \emph{is} Poissonian. Therefore, in short, $\beta = 1/2$ is a surprising, special case exhibiting unusual behavior.

 \begin{theorem} \label{thm:a sqrt(n)}
   Let $a(n) = \alpha \sqrt{n}$ for any $\alpha \in \R^\ast$, fix any $\epsilon >0$. Take $\tau \le \frac{3m}{6m-4}-\epsilon$. Then

   \begin{align}\label{sqrt(n)}
     R^{(m)}(N,f,\tau) = N^{(m-1)(1-\tau)}\left(\int_\R f(x)dx +o(1)\right),
   \end{align}
   as $N\to \infty$.
   
 \end{theorem}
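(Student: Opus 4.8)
The plan is to expand the long-range correlation $R^{(m)}(N,f,\tau)$ via Fourier analysis and reduce it to a main term matching the Poissonian prediction plus an error governed by Weyl sums. First I would write $f$ in terms of its Fourier transform, so that $f(N^\tau \Delta(\vect{j})) = \int_{\R^{m-1}} \widehat{f}(\vect{\xi}) e(N^\tau \vect{\xi} \cdot \Delta(\vect{j})) \, d\vect{\xi}$, where $\Delta(\vect{j})$ records the consecutive differences $x_{j_k} - x_{j_{k+1}}$ with $x_n = \alpha\sqrt{n} \bmod 1$. Substituting into \eqref{dil corr} and interchanging sum and integral, the inner sum over $\vect{j} \in \{1,\dots,N\}^m$ with distinct entries factors — after a change of variables expressing each $\vect{\xi}\cdot\Delta(\vect{j})$ as a linear combination $\sum_k c_k x_{j_k}$ with the $c_k$ telescoping — into a product of exponential sums $\sum_{n \le N} e(c_k N^\tau \alpha \sqrt{n})$. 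The diagonal-free condition is handled by inclusion–exclusion on coincidences $j_k = j_\ell$, which produces lower-order terms; the genuinely ``off-diagonal'' contribution is what must be shown to reproduce $N^{(m-1)(1-\tau)}\int f$.

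Next I would isolate the main term: it comes from the region where all the frequencies $c_k N^\tau$ are small, i.e. where $\vect{\xi}$ is $O(N^{-\tau})$ in a suitable sense, on which each exponential sum $\sum_{n\le N} e(c N^\tau \alpha\sqrt n)$ is close to $N$ (by comparison with the integral $\int_0^N e(cN^\tau\alpha\sqrt t)\,dt$, which by stationary phase or van der Corput is $\sim N$ when the frequency times $\sqrt N$ is bounded). Rescaling $\vect{\xi} \mapsto N^{-\tau}\vect{\xi}$ turns the truncated Fourier integral back into $N^{(m-1)(1-\tau)}\int_{\R^{m-1}} f(\vect{x})\,d\vect{x}$ up to $o(1)$, using that $\widehat f$ decays rapidly so the truncation is harmless. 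This is the step that should follow from the general methodology of Section~\ref{s:Methodology}, which I would invoke directly rather than redo.

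The crux is the error term: the contribution of frequencies $\vect{\xi}$ of size $\gg N^{-\tau}$, where I need nontrivial cancellation in the Weyl sums $S(\beta) := \sum_{n \le N} e(\beta \alpha \sqrt{n})$ with $\beta = c N^\tau$ ranging up to $N^{\tau + \text{(support of }\widehat f)}$. The standard bound here, obtainable by van der Corput / the theory of exponent pairs (or by Weyl differencing $\sqrt n$), gives $S(\beta) \ll N^{1/2}\beta^{1/2} + \beta^{-1/2} N$ type estimates (more precisely one uses an exponent pair $(k,\ell)$ to get $S(\beta) \ll (\beta/\sqrt N)^{k} N^{\ell} + \ldots$, optimized as $\beta$ varies), and one needs the product of $m$ such sums, integrated against $\widehat f$, to beat $N^{(m-1)(1-\tau)}$. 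Tracking how the saving in each factor accumulates across the $m-1$ independent frequencies (after accounting for the one linear constraint among the $c_k$) and optimizing against the range of $\beta$ is what produces the threshold $\tau \le \frac{3m}{6m-4}-\epsilon$; the exponent $3/2$ attached to $\sqrt n$ (versus the $2$ for $\alpha n^2$ which gives $\frac{m}{2m-2}$) is what changes the numerology. The main obstacle, and where care is needed, is that the Weyl-sum bound for $\alpha\sqrt n$ must be uniform in $\alpha \in \R^\ast$ (no Diophantine hypothesis is available, unlike in Theorem~\ref{thm:an^2}), so I would rely on the fact that for an irrational-type phase like $\sqrt n$ the van der Corput estimates depend only on the size of the derivatives of the phase and not on arithmetic properties of $\alpha$ — this is precisely why $\alpha\sqrt n$ can be treated for every $\alpha$ while $\alpha n^2$ cannot. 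I would then feed this uniform bound into the general estimate of Section~\ref{s:Methodology}, verify that $\tau \le \frac{3m}{6m-4}-\epsilon$ is exactly the range in which the resulting error is $o\!\left(N^{(m-1)(1-\tau)}\right)$, and conclude \eqref{sqrt(n)}.
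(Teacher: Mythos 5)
Your overall strategy is the paper's: expand the correlation by Fourier/Poisson summation, reduce to a product of Weyl sums $S(N,k)=\sum_{n\le N}e(k\alpha\sqrt n)$ over frequencies $0<\abs{k}\ll M=N^{\tau+\epsilon'}$ (Theorem \ref{thm:Weyl Bound}), bound those sums by a van der Corput estimate that is uniform in $\alpha$ (you are right that this uniformity is exactly why no Diophantine hypothesis is needed here), and optimize. The genuine gap is the quantitative input. The second derivative of the phase $k\alpha\sqrt x$ on a dyadic block $[X,2X]$ has size $\asymp kX^{-3/2}$, so the second-derivative test (equivalently the B-process carried out in Sections \ref{s:sqrt(n)}--\ref{s:Exponential Sum Bounds}) gives $S(N,k)\ll k^{1/2}N^{1/4}+k^{-1/2}N^{3/4}\log N$, not the $k^{1/2}N^{1/2}+k^{-1/2}N$ you quote. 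This is not cosmetic: your version yields $\sum_{k\le M}\abs{S}\ll M^{3/2}N^{1/2}$ and $\sum_{k\le M}\abs{S}^2\ll M^2N$, and feeding these into \eqref{Weyl m} gives an error of size $M^{(3m-2)/2}N^{-m/2}$, i.e.\ the condition $\tau<\frac{m}{3m-2}$, which for $m=2$ is only the trivial discrepancy threshold $\tfrac12$ and for $m\ge 3$ is worse. With the correct exponents one gets $\sum_{k\le M}\abs{S}\ll M^{3/2}N^{1/4}+M^{1/2}N^{3/4}\log N$ and $\sum_{k\le M}\abs{S}^2\ll M^2N^{1/2}+N^{3/2}\log^2N+MN\log N$, hence an error $\ll M^{(3m-2)/2}N^{-3m/4}$ and the stated threshold $\tau<\frac{3m}{2(3m-2)}=\frac{3m}{6m-4}$. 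So you must actually establish the $N^{1/4}$/$N^{3/4}$ form of the bound (Theorem \ref{thm:exponential sums}); the paper spends Section \ref{s:Exponential Sum Bounds} on this precisely because the range $[1,N]$ is too long for the derivative bounds to be uniform, forcing a dyadic/stationary-phase analysis rather than a one-line citation.

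Two further points your sketch leaves open. The linear constraint among the frequencies degenerates when $d(\vect{k})=-k_1-\dots-k_{m-1}=0$, where the corresponding factor is $S(N,0)=N$ and no cancellation is available; the paper splits this case off and absorbs it by induction on $m$ (it reduces to the $(m-1)$-level bound), and your argument needs the same device before Cauchy--Schwarz can be applied to the remaining terms. Also, inverting $f$ against $\Delta(\vect{j})$ with continuous frequencies glosses over the mod-$1$ structure; the paper periodizes first and then applies Poisson summation, which is what produces the \emph{integer}-frequency Weyl sums and lets the rapid decay of $\widehat f$ truncate the frequencies at $M$.
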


 Note that Theorem \ref{thm:a sqrt(n)} holds for any $\alpha \in \R^\ast$. Thus for $m=2$, \eqref{sqrt(n)} agrees with \cite{El-BazMarklofVinogradov2015a}, while for $m \ge 3$, \eqref{sqrt(n)} states that on this long-range scale all the correlations converge to the Poissonian limit, regardless of the value of $\alpha$. Therefore, this shows that the structure observed by Elkies-McMullen occurs on a very fine scale and isn't apparent on these long-range scales.

\vspace{2mm}

\noindent \textbf{Notation:} To avoid confusion, fix the following notation, let $f(n) = \cO(g(n))$ if   $\overline{\lim}_{n \to \infty}\abs{f(n)/g(n)} < \infty$, and $f(n) = o(g(n))$ if $\overline{\lim}_{n \to \infty} \abs{f(n)/g(n)} = 0$. Equivalently let $f(n) \ll g(n)$ denote $f(n) = \cO(g(n))$. 

For $x \in \R$ let $ \{x \}$ denote the fractional part of $x$, and we let $\|x\|$ denote the distance to the nearest integer. Given a set $\mathscr{A}$ we denote $\mathscr{A}^\ast := \mathscr{A} \setminus \{0\}$. Finally, as usual $e(z) := e^{2\pi i z}$.

  \section{Methodology}
\label{s:Methodology}

To achieve the convergence in \eqref{dil Poissonian} we first use discrete Fourier analysis, to show that the long-range $m$-level correlation can be written as a main term and a remainder which can be written as a product of Weyl sums. Then, for each example we use bounds on the associated Weyl sums to show that the remainder is sufficiently small. For $\vect{y} \in \R^{m}$, write

\begin{align}
  \vect{\beta}(\vect{y}) = (\beta_1(\vect{y}),\dots, \beta_{m-1}(\vect{y}))
\end{align}
where $\beta_i(\vect{y}) := a(y_{i+1}) -a(y_{i})$. Thus 

\begin{align}
  R^{(m)} (N,f,\tau) = \frac{1}{N}\sum_{\vect{y}\in \{1,\dots,N\}^m}^\ast \sum_{\vect{k} \in \Z^{m-1}} f(N^\tau(\vect{\beta}(\vect{y}) +\vect{k})).
\end{align}
Now apply Poisson summation to the sum over $\vect{k}$ to give:

\begin{align}
  R^{(m)} (N,f,\tau) = \frac{1}{N}\sum_{\vect{y}\in \{1,\dots,N\}^m}^\ast \sum_{\vect{k} \in \Z^{m-1}} e(\vect{k} \cdot \vect{\beta}(\vect{y})) \int_{\R^{m-1}}f(N^\tau \vect{x})e(\vect{x}\cdot \vect{k})d\vect{x}.
\end{align}
Now we isolate the term $\vect{k} = 0$ which gives us our main term:

\begin{align}
  R^{(m)} (N,f,\tau) = N^{(1-\tau)(m-1)}\left(\int f(\vect{x})d\vect{x} + o(1)\right) +  \cE
\end{align}
where the $o(1)$-error comes from the fact that the $y_i$ are taken to be distinct and

\begin{align*}
  \cE :=  \frac{1}{N}\sum_{\vect{y}\in \{1,\dots,N\}^m}^\ast \sum_{0 \neq \vect{k} \in \Z^{m-1}} e(\vect{k} \cdot \vect{\beta}(\vect{y})) \int_{\R^{m-1}}f(N^\tau \vect{x})e(\vect{x}\cdot \vect{k})d\vect{x}.
\end{align*}
Thus our goal in the remainder of the paper is to show that $\cE = o \left(N^{(1-\tau)(m-1)}\right)$. To achieve this bound let $M = N^{\tau+\epsilon^\prime}$ for some $\epsilon^\prime>0$ which we fix later, then we can use the fast decay of Fourier coefficients to show:

\begin{align*}
  \cE &=  \frac{1}{N}\sum_{\vect{y}\in \{1,\dots,N\}^m}^\ast \sum_{\substack{0 \neq \vect{k} \in \Z^{m-1}\\ \abs{k_i} < M }} e(\vect{k} \cdot \vect{\beta}(\vect{y})) \int_{\R^{m-1}}f(N^\tau \vect{x})e(\vect{x}\cdot \vect{k})d\vect{x}  + o(1)\\
  & \ll \frac{1}{N^{1+\tau(m-1)}} \sum_{\substack{0 \neq \vect{k} \in \Z^{m-1}\\ \abs{k_i} < M }} \abs{ \sum_{\vect{y}\in \{1,\dots,N\}^m}^\ast e(\vect{k} \cdot \vect{\beta}(\vect{y}))} + o(1).
\end{align*}
For notation set $k_0=k_m = 0$, then we can change variables and write:

\begin{align*}
  \cE &\ll  \frac{1}{N^{1+\tau(m-1)}} \sum_{\substack{0 \neq \vect{k} \in \Z^{m-1}\\ \abs{k_i} < M }} \abs{ \sum_{\vect{y}\in \{1,\dots,N\}^m}^\ast \prod_{i=1}^{m} e(a(y_i) (k_{i-1}-k_i))} + o \left(N^{(m-1)(1-\tau)}\right)\\
      & =  \frac{1}{N^{1+\tau(m-1)}} \sum_{\substack{0 \neq \vect{k} \in \Z^{m-1}\\ \abs{k_i} < 2M }} \abs{ \sum_{\vect{y}\in \{1,\dots,N\}^m}^\ast e(a(y_{m})d(\vect{k}))\prod_{i=1}^{m-1} e(a(y_i)k_i)} + o \left(N^{(m-1)(1-\tau)}\right)
\end{align*}
where $d(\vect{k})= -k_1-k_2-\dots - k_{m-1}$.

If we were considering the full correlations $(\tau =1)$, then the diagonal terms $y_i = y_j$ ($i \neq j$) would be of the same size as the main term. However since we work with $\tau < 1$, the diagonal terms are in fact negligible. Thus we can add them back into the sum. To that end, we use the inclusion-exclusion principle to write

\begin{align*}
   \sum_{\vect{y}\in \{1,\dots,N\}^m}^\ast = \sum_{\vect{y}\in \{1,\dots,N\}^m} - \sum_{\vect{y}\in \{1,\dots,N\}^m}^{(2)} 
\end{align*}
where the $(2)$ above sum signifies that at least $2$ terms in the sum must be equal. By symmetry:

\begin{align}
  \begin{aligned}
  &\frac{1}{N^{1+\tau(m-1)}} \sum_{\substack{0 \neq \vect{k} \in \Z^{m-1}\\ \abs{k_i} < 2M }} \abs{ \sum_{\vect{y}\in \{1,\dots,N\}^m}^{(2)} e(a(y_{m})d(\vect{k}))\prod_{i=1}^{m-1} e(a(y_i)k_i)} \\
  &\phantom{++++}\ll \frac{M^2}{N^{1+\tau(m-1)}} \sum_{\substack{\vect{k} \in\Z^{m-3}\\ \abs{k_i} < 2M }} \abs{ \sum_{\vect{y}\in \{1,\dots,N\}^{m-2}} e(a(y_{m-2})d(\vect{k}))\prod_{i=1}^{m-1-2} e(a(y_i)k_i)}\\
    &\phantom{++++}\ll  \frac{M^2}{N^{1+\tau(m-1)}} \sum_{\substack{0\neq \vect{k} \in\Z^{m-3}\\ \abs{k_i} < 2M }} \abs{ \sum_{\vect{y}\in \{1,\dots,N\}^{m-j}} e(a(y_{m-2})d(\vect{k}))\prod_{i=1}^{m-3} e(a(y_i)k_i)} + \frac{M^2}{N^{1+\tau(m-1)}} N^{m-2}
  \end{aligned}
\end{align}
If we write:
  \begin{align*}
    \cE_m : = \frac{1}{N^{1+\tau(m-1)}} \sum_{\substack{0 \neq \vect{k} \in \Z^{m-1}\\ \abs{k_i} < 2M }} \abs{ \sum_{\vect{y}\in \{1,\dots,N\}^m} e(a(y_{m})d(\vect{k}))\prod_{i=1}^{m-1} e(a(y_i)k_i)}
  \end{align*}
  Then we have shown that, for $\tau <1$

\begin{align}\label{inc exc}
  \cE \ll  \cE_m + \frac{M}{N^\tau}\cE_{m-1}  + o \left(N^{(m-1)(1-\tau)}\right).
\end{align}

Now we write

\begin{align*}
  S(N,k) : = \sum_{y=1}^N e(ka(y)). 
\end{align*}
In this case

\begin{align}\label{E bound}
  \cE_m &\ll   \frac{1}{N^{1+\tau(m-1)}} \sum_{\substack{0 \neq \vect{k} \in \Z^{m-1}\\ \abs{k_i} < 2M }} \abs{S(N,d(\vect{k}))}\prod_{i=1}^{m-1}\abs{S(N,k_i)} 
\end{align}
Thus we get the following theorem:

\begin{theorem} \label{thm:Weyl Bound}
  Let $\tau < 1$, then for $m \ge 2$:
  \begin{align}\label{Weyl m}
    \frac{\cE_m}{N^{(m-1)(1-\tau)}} \ll \frac{1}{N^m} \sum_{\substack{0 \neq \vect{k} \in \Z^{m-1}\\ \abs{k_i} < 2M }} \abs{S(N,d(\vect{k}))}\prod_{i=1}^{m-1}\abs{S(N,k_i)}.
  \end{align}

\end{theorem}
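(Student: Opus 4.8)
The statement is a repackaging of the estimate \eqref{E bound} together with a one-line bookkeeping of the exponent of $N$, so I do not expect any genuine difficulty; the plan is essentially to make the two-word justification ``In this case'' preceding \eqref{E bound} explicit and then normalise.

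First I would recall that, by definition,
\[
  \cE_m = \frac{1}{N^{1+\tau(m-1)}} \sum_{\substack{0 \neq \vect{k} \in \Z^{m-1}\\ \abs{k_i} < 2M}} \abs{\sum_{\vect{y}\in\{1,\dots,N\}^m} e(a(y_m)d(\vect{k})) \prod_{i=1}^{m-1} e(a(y_i)k_i)},
\]
where now the sum over $\vect{y}$ is unrestricted (the $\ast$ having been removed by the inclusion--exclusion step \eqref{inc exc}). The only substantive observation is that this inner sum over $\vect{y}\in\{1,\dots,N\}^m$ is completely multiplicative in the coordinates $y_1,\dots,y_m$: the summand is a product of factors each depending on a single $y_i$, so
\[
  \sum_{\vect{y}\in\{1,\dots,N\}^m} e(a(y_m)d(\vect{k})) \prod_{i=1}^{m-1} e(a(y_i)k_i) = \Big(\sum_{y_m=1}^N e(a(y_m)d(\vect{k}))\Big)\prod_{i=1}^{m-1}\Big(\sum_{y_i=1}^N e(a(y_i)k_i)\Big) = S(N,d(\vect{k}))\prod_{i=1}^{m-1} S(N,k_i).
\]
Taking absolute values and using that the modulus of a product is the product of the moduli yields exactly \eqref{E bound}.

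It then remains only to normalise. Dividing \eqref{E bound} through by $N^{(m-1)(1-\tau)}$ and collecting powers of $N$, the prefactor becomes $N^{-(1+\tau(m-1))-(m-1)(1-\tau)} = N^{-m}$, since $1 + \tau(m-1) + (m-1)(1-\tau) = 1 + (m-1) = m$; note in particular that the $\tau$-dependence in the prefactor cancels, which is the whole point of the normalisation. This gives \eqref{Weyl m}. (The hypothesis $\tau<1$ is not actually needed for this particular estimate; it enters only upstream, in the inclusion--exclusion reduction of $\cE$ to $\cE_m$ and $\cE_{m-1}$.)

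I do not expect a real obstacle in Theorem \ref{thm:Weyl Bound} itself: it is pure bookkeeping, and the factorisation used is forced by the product structure of $e(\vect{k}\cdot\vect{\beta}(\vect{y}))$ once one has changed variables and, crucially, reinstated the diagonal. The genuine work lies elsewhere --- partly in the steps preceding the statement (the Poisson-summation main-term extraction, the Fourier-decay truncation to $\abs{k_i}<2M$, which uses $f\in C_c^\infty$, and the inclusion--exclusion \eqref{inc exc}), but mostly in bounding the mixed Weyl-sum average on the right-hand side of \eqref{Weyl m} for the specific sequences $a(n)=\alpha n^2$ and $a(n)=\alpha\sqrt{n}$; it is there, and not in Theorem \ref{thm:Weyl Bound}, that the admissible ranges of $\tau$ in Theorems \ref{thm:an^2} and \ref{thm:a sqrt(n)} are actually determined.
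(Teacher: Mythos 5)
Your proposal is correct and coincides with the paper's own (largely implicit) argument: the inner sum over $\vect{y}$ factorises into the product $S(N,d(\vect{k}))\prod_{i=1}^{m-1}S(N,k_i)$, giving \eqref{E bound}, and dividing by $N^{(m-1)(1-\tau)}$ turns the prefactor into $N^{-m}$ since $1+\tau(m-1)+(m-1)(1-\tau)=m$. Your observations that the $\tau$-dependence cancels and that $\tau<1$ is only needed upstream are also accurate.
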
  
In what remains, we use Theorem \ref{thm:Weyl Bound} to show that (under the hypotheses stated in the main theorems of the introduction) $\cE_m = o(1)$. Thus, since the range of $\tau$ decreases with $m$, we can use an inductive argument to prove our main theorems.

\begin{remark}
  Unfortunately Theorem \ref{thm:Weyl Bound}, as it is stated, is not powerful enough to handle the case $\tau =1$ (i.e the standard correlations). The reason for this is that in that case the diagonal terms are of leading order. Therefore, to use the same approach would require a precise asymptotic for the Weyl sums $S(N,k)$. These kinds of precise asymptotics are rare in the field. Most likely, to handle the full correlations would require a multi-dimensional analysis as is used in \cite{TechnauWalker2020}.
\end{remark}


\section{Proof of Theorem \ref{thm:an^2}}
\label{s:proof of an2}

The proof of Theorem \ref{thm:an^2} is more-or-less a straightforward application of Theorem \ref{thm:Weyl Bound} and Weyl differencing (see e.g {\cite[Lemma 3.1]{Davenport2005}}). Namely, as was shown in {\cite[Corollary 5]{RudnickSarnak1998}} for $\alpha$ Diophantine, we have, for any $\delta >0$:

\begin{align}
  \label{S bound 1}
  \sum_{1\le k \le M} \abs{S(k,N)}^2 \ll M^{1+\delta}N^{1+\delta}\\
  \label{S bound 2}
  \sum_{1\le k \le M} \abs{S(k,N)} \ll M^{1+\delta}N^{1/2+\delta}.
\end{align}
Therefore if we consider the r.h.s in Theorem \ref{thm:Weyl Bound}: since $\vect{k} \neq 0$, we can assume $k_1 \neq 0$:

\begin{align*}
  \frac{1}{N^m}\sum_{\substack{0 \neq \vect{k}\in \Z^{m-1}\\\abs{k_i}< 2M}}\abs{S(N,d(\vect{k}))} \prod_{i=1}^{m-1}\abs{S(N,k_i)} \ll  \frac{1}{N^m}\sum_{\substack{\vect{k}^\prime \in \Z^{m-2}\\\abs{k_i}< 2M}} \prod_{i=2}^{m-1}\abs{S(N,k_i)} \sum_{k_1=1}^M \abs{S(N,k_1)}\abs{S(N,d(\vect{k}))}
\end{align*}
where $\vect{k}^\prime = (y_2,\dots, y_{m-1})$. First we handle the term $d(\vect{k})=0$. Note that setting $d(\vect{k}) = 0$, fixes the value of $k_1 = d(\vect{k}^\prime) = -k_2-k_3-\dots -k_{m-1}$. Moreover, we know that $k_1 >0$, thus (applying the trivial bound $S(N,0)=N$),

\begin{align*}
  \frac{1}{N^m}\sum_{\substack{\vect{k}^\prime \in \Z^{m-2}\\\abs{k_i}< 2M}} \prod_{i=2}^{m-1}\abs{S(N,k_i)} \sum_{k_1=1}^M \abs{S(N,k_1)}\abs{S(N,0 )}
  &= 
  \frac{1}{N^{m}}\sum_{\substack{0 \neq \vect{k}^\prime \in \Z^{m-2}\\\abs{k_i}< 2M}}  \abs{S(N,d(\vect{k}^\prime)}\abs{S(N,0)}\prod_{i=2}^{m-1}\abs{S(N,k_i)}\\
  &=  \frac{1}{N^{m-1}}\sum_{\substack{0 \neq \vect{k}^\prime \in \Z^{m-2}\\\abs{k_i}< 2M}} \abs{S(N,d(\vect{k}^\prime))} \prod_{i=2}^{m-1}\abs{S(N,k_i)}\\
  & = \frac{1}{N^{m-1}}\sum_{\substack{0 \neq \vect{k}^\prime \in \Z^{m-2}\\\abs{k_i}< 2M}} \abs{S(N,d(\vect{k}^\prime))} \prod_{i=2}^{m-1}\abs{S(N,0)}
\end{align*}
Thus we can use an inductive argument to bound the $d(\vect{k})=0$ term (i.e this is the same as the right hand side in Theorem \ref{thm:Weyl Bound} for $\cE_{m-1}$). For the other term, we apply Cauchy Schwarz to the sum over $k_1$ giving:

\begin{align*}
  \frac{1}{N^m}\sum_{\substack{0 \neq \vect{k}\in \Z^{m-1}\\\abs{k_i}< 2M\\ d(\vect{k})\neq 0}}\abs{S(N,d(\vect{k}))} \prod_{i=1}^{m-1}\abs{S(N,k_i)} \ll  \frac{1}{N^m}\sum_{\substack{\vect{k}^\prime \in \Z^{m-2}\\\abs{k_i}< 2M}} \prod_{i=2}^{m-1}\abs{S(N,k_i)} \left(\sum_{k=1}^M\abs{S(N,k)}^2 \sum_{k=1}^{mM}\abs{S(N,k)}^2\right)^{1/2},
\end{align*}
where we have used the fact that $0< \abs{d(\vect{k})} \le mM$ for any value of $k_1$. Now we can use \eqref{S bound 1} on the final bracket, and for each of the remaining $k_i$-sums we use \eqref{S bound 2} and the trivial bound $S(N,0) = N$. Thus

\begin{align}
  \begin{aligned}\label{error n2}
  \frac{1}{N^m}\sum_{\substack{0 \neq \vect{k}\in \Z^{m-1}\\\abs{k_i}< 2M}}\abs{S(N,d(\vect{k}))} \prod_{i=1}^{m-1}\abs{S(N,k_i)} &\ll  \frac{1}{N^m}\left(N + M^{1+\delta}N^{1/2+\delta}\right)^{m-2} \left(M^{1+\delta}N^{1+\delta}\right)\\
  &\ll \frac{1}{N^m} M^{(1+\delta)(m-1)}N^{m(1/2+\delta)} + \frac{M^{1+\delta}N^\delta}{N}\\
  &\ll \frac{M^{m-1+\delta^\prime}}{N^{m/2}}  + \frac{M^{1+\delta}N^\delta}{N}.
  \end{aligned}
\end{align}
Recall, we want to show $\frac{\cE_m}{N^{(m-1)(1-\tau)}} = o \left(1\right)$. Therefore, provided $M = N^{\frac{m}{2m-2}-\epsilon}$, for some $\epsilon > \delta^\prime \left(\frac{m}{2(m-1)(m-1+\delta^\prime)}\right)$, then the l.h.s of \eqref{error n2} is $o(1)$. Inserting this bound into \eqref{Weyl m} and using \eqref{inc exc} proves Theorem \ref{thm:an^2}.

\qed

  \section{Proof of Theorem \ref{thm:a sqrt(n)}}
\label{s:sqrt(n)}

For the proof of Theorem \ref{thm:a sqrt(n)}, we have not found any suitable bounds on Weyl sums in the literature. Fortunately we can use the established theory of exponential sums to prove the following:

\begin{theorem} \label{thm:exponential sums}
  Let $0< \abs{k} \le CM$ for any constant $C<\infty$, then as $N \to \infty$:

  \begin{align} \label{exponential}
    S(N,k) \ll k^{1/2}N^{1/4} + \frac{N^{3/4}}{k^{1/2}}\log(N).
  \end{align}

\end{theorem}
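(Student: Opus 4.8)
## Proof Proposal for Theorem \ref{thm:exponential sums}

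The plan is to recognise $S(N,k) = \sum_{y=1}^N e(k\alpha y^{1/2})$ as a classical van der Corput exponential sum with smooth, monotone phase $\phi(y) = k\alpha\sqrt{y}$, and to estimate it by the standard machinery (first/second derivative tests, equivalently the $B$-process). By the conjugation symmetry $S(N,-k)=\overline{S(N,k)}$ we may assume $k\alpha>0$ and $k\ge 1$. Then $\phi'(y)=\tfrac{k\alpha}{2}y^{-1/2}$ and $\phi''(y)=-\tfrac{k\alpha}{4}y^{-3/2}$ are both monotone, with $|\phi'(y)|\asymp k\,y^{-1/2}$ and $|\phi''(y)|\asymp k\,y^{-3/2}$ on any subinterval of $[1,N]$.

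First I would split $[1,N]$ into $O(\log N)$ dyadic blocks $[Y,2Y]$ and treat each according to the size of $|\phi'|$ on it. On the ``long'' blocks, where $Y\gtrsim k^2$ and hence $|\phi'(y)|\le\tfrac12$ throughout, the first-derivative (Kuzmin--Landau) estimate --- equivalently comparison with $\int_Y^{2Y}e(\phi)$ followed by one integration by parts, using that $\phi'$ is monotone --- gives a block contribution $\ll \bigl(\min_{[Y,2Y]}|\phi'|\bigr)^{-1}\ll \sqrt{Y}/k$. On the ``short'' blocks, where $Y\lesssim k^2$, I would invoke van der Corput's second-derivative test with $\lambda\asymp k\,Y^{-3/2}$, yielding a block contribution $\ll Y\lambda^{1/2}+\lambda^{-1/2}\ll k^{1/2}Y^{1/4}+k^{-1/2}Y^{3/4}$.

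Summing over the dyadic blocks is then routine: the geometric sums coming from the short blocks are dominated by $Y\asymp\min(k^2,N)$ and contribute $\ll k^{1/2}N^{1/4}+k^{-1/2}N^{3/4}$, while the long blocks contribute $\ll \sqrt{N}/k\le N^{3/4}k^{-1/2}$. The extra $\log N$ in the statement is a harmless overestimate: it is what one picks up from summing the $O(\log N)$ pieces crudely, and it is also precisely the error term $O\!\left(\log(\phi'(1)-\phi'(N)+2)\right)=O(\log N)$ produced by the van der Corput $B$-process, which one could use instead --- there the stationary point for frequency $\nu$ is $y_\nu=(k\alpha/2\nu)^2$, so $|\phi''(y_\nu)|^{-1/2}\asymp k\,\nu^{-3/2}$, and estimating the dual sum $\sum_{\nu\asymp k/\sqrt N}^{\ \nu\asymp k} k\,\nu^{-3/2}$ trivially reproduces the same two main terms $k^{1/2}N^{1/4}$ and $k^{-1/2}N^{3/4}$.

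I do not anticipate a serious obstacle, since this is a standard estimate from the theory of exponential sums; the only point requiring care is the bookkeeping of the two regimes. One must match the threshold $Y\asymp k^2$ (where $|\phi'|$ crosses $1$) correctly and check that, whether $k\le\sqrt N$ or $k>\sqrt N$, the pieces combine to $\ll k^{1/2}N^{1/4}+k^{-1/2}N^{3/4}\log N$ \emph{uniformly} for $0<|k|\le CM$; the hypothesis $|k|\le CM$ is used only to keep all implied constants uniform in $k$ and to ensure $\log k\ll\log N$.
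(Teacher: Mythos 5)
Your argument is correct, but it takes a genuinely different route from the paper. The paper runs the $B$-process explicitly: it applies truncated Poisson summation to $S(N,k)$, proves a stationary-phase asymptotic (Proposition \ref{prop:SPI}) for each integral $\int_1^N e(k\alpha\sqrt{x}-rx)\,dx$ by hand (splitting the range at $c_1\gamma_{k,r}$ and $c_2\gamma_{k,r}$, since the derivatives of the phase are not uniformly controlled on all of $[1,N]$), and then bounds the resulting dual sum $k\sum_r e((k\alpha)^2/4r)\,r^{-3/2}$ trivially by $k^{1/2}N^{1/4}$, with the $N^{3/4}k^{-1/2}\log N$ term coming from the boundary contributions $1/h_{k,r}'(1)$ and $1/h_{k,r}'(N)$ summed over $r$. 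Your primary route --- dyadic decomposition of $[1,N]$ followed by Kuzmin--Landau on blocks where $|\phi'|\le \tfrac12$ and the second-derivative test elsewhere --- reaches the same bound while citing only standard lemmas; indeed, since $|\phi''|$ varies by a bounded factor on each block $[Y,2Y]$, the second-derivative test alone gives $k^{1/2}Y^{1/4}+k^{-1/2}Y^{3/4}$ per block, and the geometric sums over $Y=2^j\le N$ already yield $k^{1/2}N^{1/4}+k^{-1/2}N^{3/4}$, so your $\log N$ really is a harmless overestimate. What the direct route gives up is the explicit arithmetic structure of the dual sum, which the paper's approach leaves available for further nontrivial estimation should one wish to sharpen the theorem; your parenthetical $B$-process computation at the end is essentially the paper's proof in miniature. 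The one bookkeeping point you flag --- the block on which $\phi'$ crosses $\tfrac12$ --- is handled by simply applying the second-derivative test there as well, so there is no gap.
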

With that, Theorem \ref{thm:a sqrt(n)} follows almost immediately from \eqref{Weyl m}. First, note that: for any fixed constant $C>0$

\begin{align}
  &\sum_{0 < \abs{k_i} < CM } \abs{S(N,k)}^2 \ll M^2N^{1/2} + N^{3/2}\log(N)^2 + N M\log(N)\\
  &\sum_{0 < \abs{k_i} < CM } \abs{S(N,k)} \ll M^{3/2}N^{1/4} + N^{3/4}M^{1/2}\log(N)
\end{align}
as $N \to \infty$.

Just as we did in Section \ref{s:proof of an2}, we note that if $d(\vect{k})=0$ then $k_1 = -k_2-\dots -k_{m-1} = d(\vect{k}^\prime)$. Thus

\begin{align*}
  \frac{1}{N^m} \sum_{\substack{0 \neq \vect{k} \in \Z^{m-1}\\ \abs{k_i} < 2M }} \abs{S(N,d(\vect{k}))}\prod_{i=1}^{m-1}\abs{S(N,k_i)}  &\ll \frac{1}{N^m} \sum_{\substack{\vect{k}^\prime \in \Z^{m-2}\\  \abs{k_i} < 2M }}\prod_{i=2}^{m-1}\abs{S(N,k_i)} \sum_{k_1=1}^{2M}\abs{S(N,k_1)}\abs{S(N,d(\vect{k}))},\\
  &\ll \frac{1}{N^m} \sum_{\substack{\vect{k}^\prime \in \Z^{m-2}\\  \abs{k_i} < 2M }}\prod_{i=2}^{m-1}\abs{S(N,k_i)} \sum_{\substack{k_1=1\\d(\vect{k})\neq 0}}^{2M}\abs{S(N,k_1)}\abs{S(N,d(\vect{k}))}\\
  & \phantom{++++++} +  \frac{1}{N^{m-1}} \sum_{\substack{0 \neq\vect{k}^\prime \in \Z^{m-2}\\  \abs{k_i} < 2M }}\prod_{i=2}^{m-1}\abs{S(N,k_i)} \abs{S(N,d(\vect{k}^\prime)}.
\end{align*}
Thus (as before), we can use an inductive argument on $m$ to show that the latter term in the last line is sufficiently small. For the former term we apply Cauchy-Schwarz to get

\begin{align*}
  \frac{1}{N^m} \sum_{\substack{\vect{k}^\prime \in \Z^{m-2}\\  \abs{k_i} < 2M }}\prod_{i=2}^{m-1}\abs{S(N,k_i)} \sum_{\substack{k_1=1\\d(\vect{k})\neq 0}}^{2M}\abs{S(N,k_1)}\abs{S(N,d(\vect{k}))}&\ll \frac{1}{N^m} \sum_{\substack{\vect{k}^\prime \in \Z^{m-2}\\  \abs{k_i} < 2M }}\prod_{i=2}^{m-1}\abs{S(N,k_i)} \sum_{k_1=1}^{mM}\abs{S(N,k_1)}^2\\
  & \ll \frac{1}{N^m} \left(M^{3/2}N^{1/4}\right)^{m-2}\left(N^{1/2}M^{2}\right)\\
  & \ll \frac{1}{N^{\frac{3m}{4}}}M^{\frac{3m-2}{2}} 
\end{align*}
(we have assumed here that $\tau \ge \frac{1}{2}$). Now inserting $M= N^\tau$ and recalling that we want $\frac{\cE_m}{N^{(m-1)(1-\tau)}} = o \left(1\right)$, this implies:

\begin{gather*}
  \frac{3m-2}{2}\tau   - \frac{3m}{4} < 0 \qquad \Longrightarrow \qquad
  \tau   <  \frac{3m}{2(3m-2)}
\end{gather*}
as in Theorem \ref{thm:a sqrt(n)}.

\qed

  \section{Exponential Sum Bounds}
\label{s:Exponential Sum Bounds}

This section is devoted to the proof of Theorem \ref{thm:exponential sums}, which is based on Van der Corput's B-Process -- see {\cite[Chapter 3]{Montgomery1994}} for an excellent reference. However, for our problem, the range of the sum $[1,N]$ is very large. As a consequence we do not have good control on the derivatives of the exponent. Hence we need to prove the stationary phase integrals rather than cite a known reference. Then we will use some classical analytic tools to control the various terms which arise. For simplicity of notation we assume $k >0$. Then to achieve the same bounds for negative $k$ take complex conjugates.

\subsection{Analytic Tools}
\label{ss:Analytic Tools}

For a real-valued function $g(x)$ defined on $[A,B]$ we define:

\begin{align}\label{V def}
  V(g) := \max_{(A,B)} \abs{g(x)} + t.v(g(x)),
\end{align}
where $t.v(g(x)) := \int_A^B \abs{g^\prime(x)}dx$. For a proof of the following classical lemma see {\cite[Lemma 5.1.4]{Huxley1996}}

\begin{lemma}[Van der Corput's $k^{th}$ Derivative Test]\label{lem:k deriv}
  Let $f(x)$ be real and $k$ times differentiable on $(A,B)$ with $f^{(r)}(x) \ge \mu>0$ on $(A,B)$. Let $g(x)$ be a real valued function. Then:

  \begin{align}\label{k deriv}
    \abs{\int_A^B g(x) e(f(x))dx } \ll \frac{V(g)}{\mu^{1/r}}.
  \end{align}
\end{lemma}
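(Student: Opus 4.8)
The plan is to prove the bound in two stages: first reduce to the case of the trivial weight $g \equiv 1$ by an integration-by-parts argument that manufactures the factor $V(g)$, and then establish the unweighted oscillatory-integral estimate by induction on the order $r$.

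For the reduction, I would set $I(x) := \int_A^x e(f(t))\,dt$, so that $I(A)=0$, write the integral as a Riemann--Stieltjes integral against $dI$, and integrate by parts:
\begin{align*}
  \int_A^B g(x) e(f(x))\,dx = g(B) I(B) - \int_A^B I(x)\,dg(x).
\end{align*}
If I can show the \emph{uniform} bound $\abs{I(x)} \ll \mu^{-1/r}$ for every $x \in (A,B)$, with implied constant depending only on $r$, then the boundary term is $\ll \max_{(A,B)}\abs{g}\cdot \mu^{-1/r}$ and the remaining integral is $\ll \mu^{-1/r}\int_A^B \abs{dg} = \mu^{-1/r}\,t.v(g)$; adding these reproduces exactly $V(g)\,\mu^{-1/r}$. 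The point to watch here is that the unweighted bound must hold uniformly over all initial subintervals $(A,x)$, not merely on $(A,B)$, but this is automatic since the hypothesis $f^{(r)}\ge \mu$ is inherited by every subinterval.

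It then remains to prove, for $g \equiv 1$, that $\abs{\int_a^b e(f)\,dx}\ll_r \mu^{-1/r}$ on any $(a,b)\subseteq(A,B)$, which I would do by induction on $r$. For the base case $r=1$ one uses $f'\ge\mu$ together with monotonicity of $f'$ (supplied automatically by the inductive step below): writing $e(f)=\tfrac{1}{2\pi i f'}\tfrac{d}{dx}e(f)$ and applying the second mean value theorem for integrals to the monotone factor $1/f'$ yields a bound $\ll \mu^{-1}$. For the inductive step, the hypothesis $f^{(r)}\ge\mu$ forces $f^{(r-1)}$ to be strictly increasing with growth rate at least $\mu$; hence for a threshold $\lambda>0$ the set where $\abs{f^{(r-1)}}<\lambda$ is a single interval of length $\le 2\lambda/\mu$, on which I bound the integral trivially, while on the at most two complementary intervals one has $\abs{f^{(r-1)}}\ge\lambda$ with $f^{(r-1)}$ still monotone (replacing $f$ by $-f$ on the interval where $f^{(r-1)}$ is negative), so the induction hypothesis at order $r-1$ gives $\ll \lambda^{-1/(r-1)}$ on each. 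Summing the contributions yields $\ll \lambda/\mu + \lambda^{-1/(r-1)}$, and the choice $\lambda = \mu^{(r-1)/r}$ balances the two terms to give $\ll \mu^{-1/r}$.

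The main obstacle, and really the only delicate point, is the interaction between the $r=1$ base case and the monotonicity hypothesis: the first-derivative estimate genuinely needs $f'$ to be monotone, whereas the bare statement only records $f^{(r)}\ge\mu$. The induction resolves this cleanly, because whenever the order is reduced down to $1$ the relevant subinterval carries $f''$ of constant sign, so $f'$ is monotone there; I would make this explicit, either by carrying ``$f^{(r-1)}$ is monotone'' as part of the inductive statement or, equivalently, by isolating $r=2$ as the base case and invoking the monotone first-derivative test as a separate standard sublemma. The rest is bookkeeping of the $r$-dependent constants and checking that the split into at most three subintervals preserves the uniform-over-subintervals bound required by the integration-by-parts step.
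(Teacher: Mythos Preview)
The paper does not supply its own proof of this lemma; it simply cites \cite[Lemma 5.1.4]{Huxley1996} as a reference. Your sketch is correct and is precisely the classical argument one finds in Huxley (or in Stein's \emph{Harmonic Analysis}): the integration-by-parts reduction against the antiderivative $I(x)$ of $e(f)$ produces the factor $V(g)$, and the unweighted bound $\abs{I(x)}\ll_r \mu^{-1/r}$ follows by the standard Van der Corput induction, splitting at the unique zero of $f^{(r-1)}$ and balancing the trivial estimate on a short interval against the inductive estimate on the complement. Your handling of the monotonicity issue at the $r=1$ base case is also the standard fix, so there is nothing to correct.
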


\subsubsection{Stationary Phase Integrals}
\label{sss:Stationary Phase Integrals}

If $\frac{k\alpha}{2\sqrt{N}} \le  r \le \frac{k\alpha}{2}$, then the function $h_{k,r}(x) = k\alpha\sqrt{x} - rx$ has a stationary point, $\gamma_{k,r} = \left(\frac{k\alpha}{2r}\right)^2\in (1,N)$ where  $h_{k,r}^\prime(\gamma_{k,r}) = 0 $. In that case the following is an application of classical stationary phase estimates (see for example {\cite[Lemma 5.5.2]{Huxley1996}}):

\begin{proposition}[Stationary Phase Integral]\label{prop:SPI}
  Let $r,k\in \Z$ be integers such that, $k \in \{1,2,\dots, N\}$ and $\frac{k\alpha}{2\sqrt{N}}\le  r < \frac{k\alpha}{2} $. Then in the limit as $N \to \infty$:

    \begin{align}\label{stationary}
      \begin{aligned}
      \int_1^Ne\left(h_{k,r}(x)\right)dx 
         &= e\left(\frac{(k\alpha)^2}{4r} + \frac{1}{8}\right)\frac{\gamma_{k,r}^{3/4}}{k^{1/2}}  
         +\cO\left( \frac{1}{h_{k,r}^\prime(1)}    
         +\frac{1}{h_{k,r}^\prime(N)}\right)
         + \cO\left(\frac{N^{1/4}}{k^{3/2}}  + \frac{N^{1/2}}{k^2}   \right).
      \end{aligned}
    \end{align}

\end{proposition}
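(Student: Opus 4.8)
The plan is to evaluate the oscillatory integral $\int_1^N e(h_{k,r}(x))\,dx$ with $h_{k,r}(x) = k\alpha\sqrt{x}-rx$ by the method of stationary phase, isolating the contribution of the unique stationary point $\gamma_{k,r} = (k\alpha/2r)^2$. First I would localise: fix a cutoff scale $\delta>0$ (to be chosen as a small power of $\gamma_{k,r}$) and split $[1,N] = I_0 \cup I_1$ where $I_0 = [\gamma_{k,r}-\delta,\gamma_{k,r}+\delta]$ is a neighbourhood of the stationary point and $I_1$ is the complement. On $I_1$ the phase derivative $h_{k,r}'(x) = k\alpha/(2\sqrt{x}) - r$ is bounded away from zero, so repeated integration by parts (equivalently, the first-derivative test) bounds that piece; the endpoint terms $1/h_{k,r}'(1)$ and $1/h_{k,r}'(N)$ in the claimed formula come precisely from the boundary of $I_1$ at $x=1$ and $x=N$. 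For the ranges of $x$ nearer to (but outside) $I_0$ one uses that $h_{k,r}''(x) = -k\alpha/(4 x^{3/2})$ is monotone and of known size, so Lemma \ref{lem:k deriv} with $r=2$ (the second-derivative test) controls the dyadic blocks between $\delta$ and the scale of the whole interval, producing the $N^{1/4}/k^{3/2}$ type error; the third-derivative test ($r=3$), using $h_{k,r}'''(x)\asymp k\alpha/x^{5/2}$, should be what delivers the $N^{1/2}/k^2$ error term.

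Second, on the central block $I_0$ I would extract the main term. Near $\gamma := \gamma_{k,r}$ Taylor-expand $h_{k,r}(x) = h_{k,r}(\gamma) + \tfrac12 h_{k,r}''(\gamma)(x-\gamma)^2 + O(|h_{k,r}'''|\,|x-\gamma|^3)$, where $h_{k,r}(\gamma) = (k\alpha)^2/(4r)$ and $h_{k,r}''(\gamma) = -2r^3/(k\alpha)^2 = -\tfrac12 \gamma^{-3/2}k\alpha \cdot(\text{const})$; tracking constants carefully gives $|h_{k,r}''(\gamma)|^{-1/2} \asymp \gamma^{3/4}/k^{1/2}$, which is the size of the main term in \eqref{stationary}. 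Replacing the phase by its quadratic part and the amplitude (here $\equiv 1$) by its value at $\gamma$, one gets a Gaussian/Fresnel integral $\int e(\tfrac12 h_{k,r}''(\gamma) u^2)\,du$; completing it to the whole line introduces a tail error controlled again by the second-derivative test, and the standard Fresnel evaluation $\int_{\R} e(\pm \tfrac12 A u^2)\,du = A^{-1/2} e(\pm 1/8)$ (with $A = |h_{k,r}''(\gamma)|$ and the sign negative here) yields the factor $e\!\left(\tfrac{(k\alpha)^2}{4r} + \tfrac18\right)\gamma^{3/4}/k^{1/2}$ exactly as stated. The cubic-remainder error over $I_0$ and the amplitude error (trivial here) then have to be shown to be absorbed into the $\cO(N^{1/4}/k^{3/2} + N^{1/2}/k^2)$ terms by optimising the cutoff $\delta$.

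The main obstacle, as the authors flag, is that the interval $[1,N]$ is so long relative to $k$ that one has only weak control on the derivatives of $h_{k,r}$ uniformly — the quantities $\gamma_{k,r}$, $h_{k,r}'(1)$, $h_{k,r}'(N)$ can all range over wide scales depending on $r$ — so one cannot simply cite a packaged stationary-phase lemma with a fixed amplitude bound. Concretely, the delicate point is choosing the localisation scale $\delta$ (and the dyadic decomposition of $I_1$) so that simultaneously: the Taylor cubic error on $I_0$ is $o$ of the main term, the Fresnel-tail error matches the claimed error terms, and the derivative-test bounds on the far region do not blow up near the endpoints $x=1$ or $x=N$ where $h_{k,r}'$ may be small — which is exactly why those two endpoint reciprocals appear explicitly in \eqref{stationary} rather than being bundled into the other error terms. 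I would therefore carry out the whole argument with $\delta$ kept as a free parameter, collect all error contributions as functions of $\delta$, $k$, $r$, $N$ (using $\gamma_{k,r}\in(1,N)$ and $r \asymp k\alpha/\sqrt{\gamma_{k,r}}$), and only optimise at the end; verifying that this optimisation genuinely closes for all admissible $(k,r)$ in the stated range $\frac{k\alpha}{2\sqrt N}\le r < \frac{k\alpha}{2}$ is where the real work lies.
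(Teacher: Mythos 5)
Your overall strategy (isolate the stationary point $\gamma_{k,r}$, handle the outer ranges by integration by parts/derivative tests, extract the main term from a Fresnel-type integral near $\gamma_{k,r}$) is the right family of argument, but your decomposition differs from the paper's in a way that matters. The paper does not use a shrinking window $[\gamma_{k,r}-\delta,\gamma_{k,r}+\delta]$: it cuts at $u=c_1\gamma_{k,r}$ and $v=c_2\gamma_{k,r}$ for fixed constants $0<c_1<1<c_2$, so that on the middle range $[u,v]$ every derivative satisfies the uniform bounds $\abs{h^{(j)}}\ll T/M^j$ and $h''\gg T/M^2$ with $T=k\alpha\gamma_{k,r}^{1/2}$, $M=\gamma_{k,r}$, and Huxley's packaged stationary-phase lemma applies directly, with no cubic-remainder analysis and no parameter to optimise. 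The outer ranges $(1,u)$ and $(v,N)$ are then each handled by a single integration by parts. This is exactly the device that neutralises the difficulty you flag (no uniform derivative control over all of $[1,N]$): by cutting at constant multiples of the stationary point, the derivative bounds become uniform on each piece separately.

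The genuine gap in your plan is at the interface $x=\gamma_{k,r}\pm\delta$. You propose to bound the Fresnel tail (from completing the inner integral to the whole line) and the contribution of the adjacent outer blocks as two independent error terms. They are not independently small: each produces a term of size $\asymp 1/\abs{h'(\gamma_{k,r}\pm\delta)}\asymp \gamma_{k,r}^{3/2}/(k\delta)$, and for, say, $\gamma_{k,r}\asymp N$ no admissible $\delta<\gamma_{k,r}$ makes $\gamma_{k,r}^{3/2}/(k\delta)$ fit inside $\cO(N^{1/4}/k^{3/2}+N^{1/2}/k^2)$ (you would need $\delta\gg Nk$). These interface terms must \emph{cancel} against one another, not be bounded; in the paper this cancellation is exact, because Huxley's lemma outputs the boundary terms $e(h(v))/(2\pi i h'(v))-e(h(u))/(2\pi i h'(u))$ explicitly and they annihilate the matching bracket terms from the integrations by parts on $(1,u)$ and $(v,N)$, leaving only the $x=1$ and $x=N$ contributions that appear in \eqref{stationary}. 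Relatedly, your naive quadratic Taylor truncation cannot be pushed through uniformly: making the cubic remainder small forces $\delta\ll\gamma_{k,r}^{5/6}/k^{1/3}$ while capturing the Fresnel mass needs $\delta\gg\gamma_{k,r}^{3/4}/k^{1/2}$, and these are incompatible when $\gamma_{k,r}\ll k^2$, i.e. for $r$ near $k\alpha/2$ — the classical proofs avoid this via the exact substitution $h(x)-h(\gamma_{k,r})=-y^2$ rather than a truncated expansion. (Minor point: with $h''<0$ the Fresnel phase is $e(-1/8)$, not $e(+1/8)$; this sign ambiguity is also present in the paper's statement and is harmless since only $\abs{S}$ is used downstream.)
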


\begin{proof}
  For this proof we will suppress the subscripts $k,r$ whenever they appear. To prove \eqref{stationary} we fix two constants $0 < c_1 < 1 < c_2 < \infty$ independent of $N$ and fix $u :=  c_1\gamma$ and $v : = c_2\gamma$. Now we consider the three integrals on $(1,u)$, $(u,v)$, and $(v,N)$ separately.

\begin{steps}
\item To address the integral on $(1,u)$, apply integration by parts, to give:

  \begin{align}
    \begin{aligned}
    \int_{1}^u e(h(x))dx &= \left[\frac{e(h(x))}{2\pi i h^\prime(x)}\right]_{x=1}^u + \frac{1}{2\pi i} \int_{1}^u \frac{h^{\prime\prime}(x)e(h(x))}{h^\prime(x)^2}dx
    \end{aligned}
  \end{align}
  Now if $u < 1$, then the term of the right is $\cO \left(\frac{1}{h^\prime(1)}\right)$. If not, then $r < \frac{k\alpha\sqrt{c_1}}{2}$. Thus, applying the first integral test gives:

  \begin{align}
    \begin{aligned}
    \int_{1}^u e(h(x))dx & = \left[\frac{e(h(x))}{2\pi i h^\prime(x)}\right]_{x=1}^u + \cO \left( V\left( \frac{h^{\prime\prime}(x)}{h^\prime(x)^2} \right) \left(\min \abs{h^\prime(x)}\right)^{-1} \right)
    \end{aligned}
  \end{align}

  Now note that on $(1,u)$, the derivative $\abs{h^\prime(x)} \ge C\frac{k}{\gamma^{1/2}}$ where the constant $C= \alpha\left(\frac{1}{\sqrt{c_1}}-1\right)$. Thus

  \begin{align}\label{(1,u)}
    \begin{aligned}
    \int_{1}^u e(h(x))dx & = \left[\frac{e(h(x))}{2\pi i h^\prime(x)}\right]_{x=1}^u + \cO \left(\frac{\gamma^{1/2}}{k} \left[ \frac{h^{\prime\prime}(x)}{h^\prime(x)^2} \right]_1^u  \right)\\
    &= \left[\frac{e(h(x))}{2\pi i h^\prime(x)}\right]_{x=1}^u + \cO \left(\frac{\gamma^{1/2}}{k} \left( \frac{k}{\left(k\alpha - r\right)^2} + \frac{1}{\gamma^{1/2}k}   \right)\right)\\
    &= \left[\frac{e(h(x))}{2\pi i h^\prime(x)}\right]_{x=1}^u + \cO \left(\frac{N^{1/2}}{k^2} + \frac{1}{k^2}   \right).
    \end{aligned}
  \end{align}


    \item  The proof for $(v,N)$ is similar: First we use the first derivative test (Lemma \ref{lem:k deriv}) to show:
    \begin{align*}
      \int_v^Ne(h(x))dx&=
      \left[\frac{e(h(x))}{2\pi i h^\prime(x)}\right]_{x=v}^N + \frac{1}{2\pi i} \int_{v}^q \frac{h^{\prime\prime}(x)e(h(x))}{h^\prime(x)^2}dx\\
      & = \left[\frac{e(h(x))}{2\pi i h^\prime(x)}\right]_{x=v}^N + \cO\left(\frac{V\left(h^{\prime\prime}(x)/h^\prime(x)^2\right)}{\min \abs{h^\prime(x)}} \right)   \\
    \end{align*}
     Moreover, evaluating $V$ gives:

    \begin{align}
      \begin{aligned}
      \frac{V\left(h^{\prime\prime}(x)/h^\prime(x)^2\right)}{\min\abs{h^\prime(x)}} &= \cO\left(\frac{1}{k^2}\right).
      \end{aligned}
    \end{align}
    Therefore:

    \begin{align}\label{(v,q)}
      \int_v^N e(h(x))dx =  \left[\frac{e(h(x))}{2\pi i h^\prime(x)}\right]_{x=v}^N + \cO\left(\frac{1}{k^2}\right). 
    \end{align}

  \item
    Working in $[u,v]$, set $T = k\alpha \gamma^{1/2}$ and $M= \gamma$. Then for $j=2,3,4$, there exist constants $0< C_j < \infty$ such that

  \begin{align*}
    \abs{h^{(j)}(x)} \le C_j\frac{T}{M^j} \qquad , \qquad h^{\prime\prime}(x)\ge h^{\prime\prime}(v) \ge C_2^{-1}  \frac{T}{M^2}
  \end{align*}
  Thus we can apply a classical stationary phase integral {\cite[Lemma 5.5.2]{Huxley1996}} to say:

  \begin{align}
    \begin{aligned}
    \int_u^vh(x) dx &= \frac{e(h(\gamma)+1/8)}{\sqrt{h^{\prime\prime}(\gamma)}} + \frac{e(h(v))}{2\pi i h^{\prime}(v)} - \frac{e(h(u))}{2\pi i h^{\prime}(u)}
                      + \cO\left(\frac{M}{T^{3/2}} + \frac{M^4}{T^2}\left(\frac{1}{\gamma}  \right)^3  \right)\\
     &= \frac{e(h(\gamma)+1/8)}{\sqrt{h^{\prime\prime}(\gamma)}} + \frac{e(h(v))}{2\pi i h^{\prime}(v)} - \frac{e(h(u))}{2\pi i h^{\prime}(u)}
       + \cO\left(\frac{\gamma}{k^{3/2}\gamma^{3/4}} + \frac{\gamma^3}{k^2}\frac{1}{\gamma^3}  \right)\\
     &= \frac{e(h(\gamma)+1/8)}{\sqrt{h^{\prime\prime}(\gamma)}} + \frac{e(h(v))}{2\pi i h^{\prime}(v)} - \frac{e(h(u))}{2\pi i h^{\prime}(u)}
     + \cO\left(\frac{\gamma^{1/4}}{k^{3/2}} + \frac{1}{k^2} \right).
     \end{aligned}
  \end{align}
  \eqref{stationary}, now follows directly from this bound together with \eqref{(1,u)} and \eqref{(v,q)}. 

\vspace{-5mm}
\end{steps}
\end{proof}

\subsection{Proof of Theorem \ref{thm:exponential sums}}

Our goal is to estimate
\begin{align*}
  S = S(N,k) = \sum_{y\in \{1,\dots, N \} }e \left( k\alpha\sqrt{y}\right).
\end{align*}

\begin{steps}
  \item
  First, apply the truncated Poisson summation formula {\cite[Lemma 5.4.3]{Huxley1996}}: For $A := \frac{k\alpha}{2\sqrt{N}}$ and $B:=\frac{k\alpha}{2}$:
  
  \begin{align*}
    S = \sum_{A-\frac{1}{4} < r < B+\frac{1}{4}} \int_1^N e\left(k\alpha \sqrt{x} - rx\right)dx + \cO( \log (N)).
  \end{align*}
  First note that if $r=0$ then $\int_1^N e\left(k\alpha\sqrt{x}\right)dx \ll \frac{N^{1/2}}{k}$ by the first derivative test. Thus, if we set $\wt{A} = \min\left(1, \lceil A-1/4\rceil\right)$ and $\wt{B} :=\lfloor B+1/4\rfloor$, and apply Proposition \ref{prop:SPI} and Lemma \ref{lem:k deriv} ($2^{nd}$ derivative test) then we have the bound:

  \begin{align}
    \begin{aligned}\label{E expanded}
    S &  = \sum_{\wt{A} \le  r \le  \wt{B}} \left( e\left(\frac{(k\alpha)^2}{4r} + \frac{1}{8}\right)\frac{\gamma_{k,r}^{3/4}}{k^{1/2}}  
    +\cO\left( \min \left\{\frac{1}{ h_{k,r}^\prime(1)}, \frac{N^{3/4}}{k^{1/2}}\right\}
    + \min \left\{\frac{1}{h_{k,r}^\prime(N)}, \frac{N^{3/4}}{k^{1/2}}\right\}\right)\right)\\
    &\phantom{+++++++++++++++++++++++} + \cO\left( \frac{N^{1/2}}{k} + \frac{N^{1/4}}{k^{1/2}} + \log(N)\right)  .
    \end{aligned}
  \end{align}
  We have used the fact that there are order $k$ terms in the sum over $r$ to pull two of the error terms out of the sum (and the resulting error is $\cO(k^{1/2})$). 


\item
  The latter two terms in the sum can both be controlled in the same way:

\begin{align}
\begin{aligned}\label{step2}
\sum_{\wt{A} \le  r \le \wt{B} }\min \left\{\frac{1}{ h_{k,r}^\prime(1)}, \frac{N^{3/4}}{k^{1/2}}\right\} & \ll  \abs{\sum_{\wt{A} \le  r \le \wt{B}} \min \left\{\frac{1}{ \frac{k\alpha}{2} - r }, \frac{N^{3/4}}{k^{1/2}}\right\}    }      \\
              & \ll \log (N) + \min \left\{ \frac{1}{\| \frac{k\alpha}{2} \|}, \frac{N^{3/4}}{k^{1/2}}  \right\}.
\end{aligned}
\end{align}
The same bound holds for the term involving $h^\prime_{k,r}(N)$. Thus:

\begin{align} \label{S bound1}
  S &  = \sum_{\wt{A} \le  r \le \wt{B}}  e\left(\frac{(k\alpha)^2}{4r} + \frac{1}{8}\right)\frac{\gamma_{k,r}^{3/4}}{k^{1/2}}  
        + \cO\left( \frac{N^{3/4}}{k^{1/2}} + \frac{N^{1/2}}{k} +\log(N) \right).
\end{align}


\item

  Inserting the definition of $\gamma_{k,r}$, the remaining term can be written:

  \begin{align}
    \begin{aligned}
      E &:= k \sum_{\wt{A} < r <  \wt{B} }  e\left(\frac{(k\alpha)^2}{4r}\right)\frac{1}{r^{3/2}},\\
      &\le k \sum_{\wt{A}< r< \wt{B}} \frac{1}{r^{3/2}}\\
      &\ll k \frac{1}{\wt{A}^{1/2}} \ll N^{1/4}k^{1/2}.
      \end{aligned}
    \end{align}

\end{steps}
\qed

  \small 
  \section*{Acknowledgements}
  For part of this work, the author was supported by EPSRC Studentship EP/N509619/1 1793795. Furthermore, the author would like to thank Jens Marklof for several useful discussions and Niclas Technau for comments on an early preprint of the paper.

  \bibliographystyle{alpha}
  \bibliography{biblio}

\end{document}